\documentclass{amsart}
\usepackage{etex}
\usepackage{xcolor}
\usepackage{amssymb,latexsym,amsmath,extarrows}
\usepackage{amsthm}
\usepackage{mathabx}
\usepackage{graphicx,mathrsfs,comment}
\usepackage{hyperref,url}
\usepackage{pict2e}
\usepackage{enumerate}
\usepackage{hyperref}
\usepackage{bm}

\usepackage{cancel}

\usepackage{amstext}
\usepackage{bbm} 

\numberwithin{equation}{section}

\newcommand{\orcid}[1]{\href{https://orcid.org/#1}{\texttt{ORCID: #1}}}

\usepackage{esint}

\newtheorem{theorem}{Theorem}[section]
\newtheorem{lemma}[theorem]{Lemma}

\newtheorem*{remark*}{Remark}

\makeatletter
\newcommand{\barredsum}{%
  \DOTSB\mathop{\mathpalette\@barredsum\relax}\slimits@
}
\newcommand{\@barredsum}[2]{%
  \begingroup
  \sbox\z@{$#1\sum$}%
  \setlength{\unitlength}{\dimexpr2pt+\ht\z@+\dp\z@\relax}%
  \@barredsumthickness{#1}%
  \vphantom{\@barredsumbar}%
  \ooalign{$\m@th#1\sum$\cr\hidewidth$#1\@barredsumbar$\hidewidth\cr}%
  \endgroup
}
\newcommand{\@barredsumbar}{%
  \vcenter{\hbox{\begin{picture}(0,1)\roundcap\Line(0,0)(0,1)\end{picture}}}%
}
\newcommand{\@barredsumthickness}[1]{
  \linethickness{%
    1.25\fontdimen8
      \ifx#1\displaystyle\textfont\else
      \ifx#1\textstyle\textfont\else
      \ifx#1\scriptstyle\scriptfont\else
      \scriptscriptfont\fi\fi\fi 3
  }%
}
\makeatother

\newcommand{\e}{\varepsilon}

\newcommand{\p}{\partial}

\begin{document}

\title[A Short Proof of a Theorem of Pan and Zhang]{A Short Proof of a Theorem of Pan and Zhang}

\date{}

\author{Xiangyu Wang} \address{Xiangyu Wang \\ Department of Mathematics\\ University of Illinois Urbana-Champaign\\  \orcid{0009-0003-5983-6961}} \email{xw70@illinois.edu}

\begin{abstract}
Pan and Zhang proved that a positive proportion of positive integers can be represented in the form $a^2+b^2+2^{c^2}$, where $(a,b,c)$ are nonnegative integers. In this note, we give a short proof of their result. The proof uses Romanoff's method and an elementary divisor argument to control the relevant second moment.
\end{abstract}

\maketitle


\section{Introduction}\label{sec:introduction}
Let
\[
\mathcal{S}
=
\{a^2+b^2:a,b\in\mathbb{Z}_{\geq 0}\}
\]
denote the set of integers representable as a sum of two squares. A classical
theorem of Landau~\cite{Landau1908} asserts that
\[
\#\{n\leq x:n\in\mathcal{S}\}
\sim
K\frac{x}{\sqrt{\log x}},
\]
where $K>0$ is the Landau--Ramanujan constant. In particular, $\mathcal{S}$
has asymptotic density zero.

On the other hand, the sequence
\[
\{2^{c^2}:c\in\mathbb{Z}_{\geq 0}\}
\]
is extremely sparse: it contains only $O(\sqrt{\log x})$ elements up to $x$.
Nevertheless, Pan and Zhang~\cite{PanZhang2011} proved that adding this sparse
sequence to the set of sums of two squares produces a set of positive lower
density. More precisely, they proved that
\[
\mathcal{S}
+
\{2^{c^2}:c\in\mathbb{Z}_{\geq 0}\}
=
\{a^2+b^2+2^{c^2}:a,b,c\in\mathbb{Z}_{\geq 0}\}
\]
has positive lower density. Their result may be viewed as a variant of
Romanoff's classical theorem~\cite{Romanoff1934}, which states that, for every
integer $g\geq 2$, a positive proportion of integers can be represented in the
form
\[
p+g^k,
\]
where $p$ is prime and $k$ is a nonnegative integer.

The purpose of this note is to give a short proof of the positive-density
result of Pan and Zhang. Our argument follows the second-moment strategy of
Romanoff. Let
\[
r(n)
:=
\#\{(m,c):n=m+2^{c^2},\ m\in\mathcal{S},\
c\in\mathbb{Z}_{\geq 0}\}.
\]
By the Cauchy--Schwarz inequality, it suffices to obtain a linear upper bound
for the second moment
\[
\sum_{n\leq x}r(n)^2.
\]
The main observation is that the contribution of two exponents $c_1<c_2$ can
be controlled efficiently by exploiting the factorization
\[
c_2^2-c_1^2=(c_2-c_1)(c_2+c_1)
\]
and a simple divisor argument. Combined with the standard estimates occurring
in Romanoff's method, this yields the required second-moment bound and gives a
short proof of the following theorem.

\begin{theorem}\label{thm:main}
A positive proportion of positive integers can be represented in the form
\[
a^2+b^2+2^{c^2},
\]
where $a,b,c\in\mathbb{Z}_{\geq 0}$.
\end{theorem}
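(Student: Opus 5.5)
We follow Romanoff's second-moment strategy. Let $C=\lfloor\sqrt{\log_2 x}\rfloor$ and restrict to $m\in\mathcal{S}$, $m\le x/2$ and $c\le C'$ with $2^{C'^2}\le x/2$, so every counted $n=m+2^{c^2}$ is $\le x$. The first moment is
\[
\sum_{n\le x} r(n)\ \ge\ \#\{m\le x/2:m\in\mathcal{S}\}\cdot\#\{c:2^{c^2}\le x/2\}\gg \frac{x}{\sqrt{\log x}}\cdot\sqrt{\log x}=x
\]
by Landau's theorem. By Cauchy--Schwarz,
\[
\#\{n\le x:r(n)>0\}\ \ge\ \frac{\bigl(\sum_n r(n)\bigr)^2}{\sum_n r(n)^2},
\]
so it suffices to prove $\sum_{n\le x} r(n)^2\ll x$.

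We expand the second moment. The diagonal pairs $c_1=c_2$ contribute $\sum_n r(n)\ll x$. For $c_1<c_2\le C$ we must count $m_1,m_2\in\mathcal{S}$, $m_1\le x$, with $m_1-m_2=2^{c_2^2}-2^{c_1^2}=2^{c_1^2}(2^{d}-1)$, where $d=c_2^2-c_1^2$. For this we use a standard upper-bound sieve estimate for shifted pairs of sums of two squares (the analogue of the twin-prime bound used by Romanoff): for $h\ne0$,
\[
\#\{m\le x:m,\ m+h\in\mathcal{S}\}\ll \frac{x}{\log x}\prod_{p\mid h,\ p\equiv 3\ (4)}\Bigl(1+\frac1p\Bigr)^{O(1)}\ll \frac{x}{\log x}\sum_{q\mid h}\frac{\mu^2(q)\,\tau(q)^{O(1)}}{q},
\]
which follows from Selberg's sieve applied to the density-$1/2$ sieve problem of excluding primes $\equiv 3 \pmod 4$ to odd powers. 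The power of $2$ in $h$ is harmless, so only the odd part $2^{d}-1$ matters. Writing $g(d)=\sum_{q\mid 2^d-1}\mu^2(q)\tau(q)^{A}/q$, the off-diagonal contribution is
\[
\ll \frac{x}{\log x}\sum_{c_1<c_2\le C} g\bigl(c_2^2-c_1^2\bigr).
\]
Since $C^2\asymp\log x$, it suffices to show $\sum_{c_1<c_2\le C}g(c_2^2-c_1^2)\ll C^2$.

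This is the main step. We interchange the order of summation. A squarefree odd $q$ divides $2^d-1$ iff $\ell(q)\mid d$, where $\ell(q)$ is the multiplicative order of $2$ mod $q$. Hence
\[
\sum_{c_1<c_2\le C} g(c_2^2-c_1^2)=\sum_{q\ \mathrm{odd}}\frac{\mu^2(q)\tau(q)^A}{q}\,\#\{c_1<c_2\le C:\ \ell(q)\mid (c_2-c_1)(c_2+c_1)\}.
\]
To bound the inner count with $\ell=\ell(q)$, we use the divisor argument: if $\ell\mid uv$ with $u=c_2-c_1$ and $v=c_2+c_1$, then $\ell=\ell_1\ell_2$ with $\ell_1\mid u$ and $\ell_2\mid v$. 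The pair $(u,v)$ determines $(c_1,c_2)$, so the count is at most
\[
\sum_{\ell_1\ell_2=\ell}\Bigl(\frac{2C}{\ell_1}+1\Bigr)\Bigl(\frac{2C}{\ell_2}+1\Bigr)\ll \frac{\tau(\ell)C^2}{\ell}+\tau(\ell)\,C+\tau(\ell).
\]
Since $u,v\le 2C$, the count is zero unless some factorization has both $\ell_i\le 2C$; in particular we need $\ell\le 4C^2$.

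It remains to estimate sums of the form $\sum_q \mu^2(q)\tau(q)^A\tau(\ell(q))/(q\,\ell(q))$, together with the lower-order terms restricted to $\ell(q)\le 4C^2$. We group by $\ell=\ell(q)$: every $q$ with $\ell(q)=\ell$ divides $2^\ell-1$, so $\sum_{\ell(q)=\ell}\mu^2(q)\tau(q)^A/q\le\prod_{p\mid 2^\ell-1}(1+2^A/p)\ll(\log\log 3\ell)^{B}$, because $2^\ell-1$ has $O(\ell)$ prime factors. Summing $\tau(\ell)(\log\log\ell)^B/\ell^{2}\cdot\ell$ is too crude, so we instead use Romanoff's classical estimate $\sum_{q}1/(q\,\ell(q))<\infty$, in the weighted form $\sum_q \mu^2(q)\tau(q)^A\tau(\ell(q))\ell(q)^{-1}q^{-1}<\infty$. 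This is proved the same way, via $\sum_{\ell(q)=\ell}1/q\ll\log\log\ell$ and $\sum_\ell \tau(\ell)(\log\log\ell)^{B}\ell^{-1-\delta}$ after splitting off one factor. The main term then gives $O(C^2)$. The terms $\tau(\ell)C$ with $\ell\le 4C^2$ contribute $C\sum_{\ell\le 4C^2}\tau(\ell)(\log\log \ell)^B\ll C\cdot C^{2}(\log C)^{O(1)}$, which is too large as stated. I expect this to be the main obstacle. To handle it, I would use the refined counting $\#\{(u,v):\ell_1\mid u,\ell_2\mid v,\ u<v\le 2C\}\le (2C/\ell_1)(2C/\ell_2)$ whenever $\ell_1,\ell_2\le 2C$, which makes the $+1$ terms unnecessary; otherwise the count is zero. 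This removes the lower-order terms, so the sum is $\ll C^2$, hence $\sum r(n)^2\ll x$, completing the proof.
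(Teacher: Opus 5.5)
\textbf{Same architecture, but the last step has a gap.} Your strategy is the paper's: Cauchy--Schwarz, Landau for the first moment, an upper-bound sieve for $\#\{m\le x: m,m+h\in\mathcal{S}\}$, and interchange over odd $q$ with $\ell(q)\mid c_2^2-c_1^2$. Your divisor count is correct. Since $u,v\ge1$, each factorization $\ell=\ell_1\ell_2$ admits at most $\lfloor 2C/\ell_1\rfloor\lfloor 2C/\ell_2\rfloor\le 4C^2/\ell$ pairs, so the inner count is $\ll C^2\tau(\ell)/\ell$. That is the same bound the paper gets by parametrizing with $r=c_1+c_2$ and counting multiples of $\operatorname{lcm}(r,s)$ up to $r^2$.

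The gap is the final step: the convergence of
\[
\sum_{q\ \mathrm{odd}}\frac{\mu^2(q)\tau(q)^A\,\tau(\ell(q))}{q\,\ell(q)}.
\]
You rightly note that grouping by the exact order $\ell(q)=\ell$ and inserting a pointwise bound for $\sum_{\ell(q)=\ell}\mu^2(q)\tau(q)^A/q$ is too crude. You then say the weighted estimate is ``proved the same way,'' using that same pointwise bound and a sum $\sum_\ell\tau(\ell)(\log\log\ell)^B\ell^{-1-\delta}$. There is no source for the extra factor $\ell^{-\delta}$. After grouping by exact order the weight is $\tau(\ell)/\ell$, and $\sum_\ell\tau(\ell)/\ell$ diverges. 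So a pointwise bound of the size you state cannot give convergence, and neither can the $(\log 3\ell)^B$ that your stated reasoning actually yields. (Having $O(\ell)$ prime factors in $2^\ell-1$ gives $(\log 3\ell)^B$, not $(\log\log 3\ell)^B$.) As written, the key arithmetic input is unproved.

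\textbf{The fix: use a cumulative bound.} Every odd $q$ with $\ell(q)\le T$ divides $N_T=\prod_{j\le T}(2^j-1)$, which has $O(T^2)$ prime factors. Hence
\[
A(T):=\sum_{\ell(q)\le T}\frac{\mu^2(q)\tau(q)^A}{q}\le\prod_{p\mid N_T}\Bigl(1+\frac{2^A}{p}\Bigr)\ll(\log T)^{2^A}.
\]
This bound is polylogarithmic in $T$, whereas summing pointwise bounds over $\ell\le T$ would give something of size $T$. Now bound $\tau(\ell(q))\ll_\varepsilon\ell(q)^{\varepsilon}$ and sum by parts against the monotone weight $k^{-(1-\varepsilon)}$:
\[
\sum_{k\ge1}\frac{A(k)-A(k-1)}{k^{1-\varepsilon}}\ll\sum_{k\ge2}\frac{(\log k)^{2^A}}{k^{2-\varepsilon}}<\infty .
\]
This is exactly the paper's Lemma 2.2 on the order of $2$. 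The paper uses weight $1$ there, because its sieve lemma carries $(1+1/p)$ to the first power, so the factor $\tau(q)^A$ never appears. With this replacement your argument goes through and is essentially the paper's proof.
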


The remainder of the paper is organized as follows. In
Section~\ref{sec:preliminaries}, we recall the standard estimates needed in the
argument. In Section~\ref{sec:proof}, we prove Theorem~\ref{thm:main}.

$\bullet$ \textbf{Notation. } Throughout the paper, we write
\[
\mathcal{S}
=
\{a^2+b^2:a,b\in\mathbb{Z}_{\geq 0}\}
\]
for the set of nonnegative integers representable as a sum of two squares.
For a positive integer $n$, we denote by $\tau(n)$ the number of positive
divisors of $n$. If $d$ is odd, we write
\[
s(d):=\operatorname{ord}_d(2)
\]
for the multiplicative order of $2$ modulo $d$. We use the notation
$A\ll B$, $B\gg A$, and $A=O(B)$ to mean that $|A|\leq CB$ for some
absolute constant $C>0$. If the implied constant depends on a parameter
$\varepsilon$, we write $A\ll_\varepsilon B$ or
$A=O_\varepsilon(B)$.

$\bullet$ \textbf{Acknowledgments.} The author would like to thank Prof. Kevin Ford for reading an earlier version of the manuscript.

$\bullet$ \textbf{AI usage} ChatGPT were used to assist with language editing, LaTeX formatting, and the presentation of the manuscript.

\bigskip

\section{Preliminaries}\label{sec:preliminaries}
In this section, we record the two estimates needed in the proof of
Theorem~\ref{thm:main}.

Recall that
\[
\mathcal{S}
=
\{a^2+b^2:a,b\in\mathbb{Z}_{\geq0}\}.
\]

We first use the following standard upper-bound sieve estimate for correlations
of two translates of $\mathcal{S}$.

\begin{lemma}\label{lem:correlation}
Let $h$ be a nonzero integer. Uniformly in $h$, we have
\[
\#\{n\leq x:n\in\mathcal{S},\ n+h\in\mathcal{S}\}
\ll
\frac{x}{\log x}
\prod_{\substack{p\mid h\\p\equiv3\pmod4}}
\left(1+\frac1p\right).
\]
\end{lemma}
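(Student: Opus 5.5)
We will deduce Lemma~\ref{lem:correlation} from a standard upper-bound sieve, such as Selberg's sieve or the large sieve in its sieve form, applied to a one-dimensional problem in which we sift by primes $p\equiv 3\pmod 4$. The first step is to replace $\mathcal{S}$ by a larger, sieve-friendly set. If $n\in\mathcal{S}$ and $p\equiv 3\pmod 4$, then $p$ divides $n$ to an even power. In particular, $n\in\mathcal{S}$ with $n\geq1$ implies that $n \not\equiv p \pmod{p^2}$ class-wise; more precisely, $n\bmod p^2$ avoids the $p-1$ residue classes $pu$ with $p\nmid u$.

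For a crude but sufficient version, we work modulo $p$ and $p^2$ simultaneously. For each prime $p\equiv3\pmod 4$ with $p\leq z=x^{\delta}$, both $n$ and $n+h$ must avoid the classes $\{pu \bmod p^2: p\nmid u\}$. This sieve problem has density of excluded classes $\omega(p)/p^2$, which is too small (of order $1/p$ only) to give $1/\log x$.

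So instead we sieve with sieve dimension $1$ at level $p$ by using the classical trick. Write $n=2^{e}mk^2$ with $m$ squarefree and every prime factor of $m$ either equal to $2$ or $\equiv 1\pmod 4$. Since $n\in\mathcal{S}$ is a sum of two squares, its part coprime to $k$ has no prime $\equiv3\pmod4$. Equivalently, for $n\in\mathcal S$, every $p\equiv3\pmod4$ dividing $n$ has $p^2\mid n$.

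The second step counts $n\leq x$ according to the square-full $3\bmod 4$ parts. Write $n=a n'$ and $n+h=b n''$, where $a,b$ are the $(3 \bmod 4)$-parts, which are squareful, and $n',n''$ have no prime factor $\equiv 3 \pmod 4$. For fixed $a,b$, the pair $(n',n'')$ satisfies the linear relation $bn''-an'=h$. We sieve the integers $n'\leq x/a$ in the admissible progression, removing for each $p\equiv3\pmod4$, $p\leq z$, $p\nmid ab$, the classes $n'\equiv0$ and $an'+h\equiv0\pmod p$. These are two classes when $p\nmid h$ and one class when $p\mid h$.

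The upper-bound sieve then gives
\[
\ll \frac{x}{a\,[a,b]'}\prod_{p\le z,\,p\equiv3(4)}\Bigl(1-\frac{\omega(p)}p\Bigr),
\]
and the product is
\[
\ll (\log x)^{-1}\prod_{p\mid h,\,p\equiv 3(4)}(1+1/p),
\]
by Mertens' theorem in progressions, since $\prod_{p\equiv3(4),p\le z}(1-2/p)\asymp(\log z)^{-1}$. Summing over squareful $a$, $b$ converges, because $\sum 1/a$ over squareful $a$ composed of primes $\equiv 3 \pmod 4$ is finite. The large-$a$ range, say $a>x^{\delta}$, is handled trivially by the bound $\ll x\sum_{a>x^\delta}1/a\ll x^{1-\delta/2}$.

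The main obstacle is the bookkeeping in this second step. The sieve must be uniform in $h$, and the local factor at $p\mid ab$ must be handled correctly. Since $b$ is determined only up to its compatibility with $a$ and $h$, I would simply discard the constraint involving $b$ and sieve only $n'$ and $n+h$ for primes $p\nmid a$. This loses nothing essential, because the factors $(1-2/p)^{-1}$ for $p\mid a$ contribute at most $\prod_{p\mid a}(1+O(1/p))$, which is absorbed into the convergent sum over $a$.
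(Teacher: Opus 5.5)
The paper does not prove this lemma; it quotes it as a standard upper-bound sieve estimate. Your plan is the standard route to it: strip off the parts of $n$ and $n+h$ built from primes $p\equiv3\pmod4$, then run a one-dimensional sieve over those primes. Your ``second step'' is a valid set-up. With both $a$ and $b$ fixed, $n'$ confined to $an'\equiv-h\pmod b$ (a single class modulo $b/\gcd(a,b)$), and only primes $p\nmid ab$ sifted, the sifted set does contain what you want to count.

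The gap is in your last paragraph, where you actually settle the bookkeeping. If you fix only $a$ and remove the class $an'+h\equiv0\pmod p$ for every $p\nmid a$, you discard integers that must be counted. The reason is that $n+h\in\mathcal{S}$ does not give $p\nmid n+h$; it only gives $p^2\mid n+h$ whenever $p\mid n+h$. For example, take $n+h=9m$ with $m\in\mathcal{S}$ and $3\nmid am$. That class may be removed only for $p\nmid b$, so $b$ must stay fixed. Without $b$ you can remove only $n'\equiv0$, which is a sieve of dimension $1/2$ and gives merely $x/\sqrt{\log x}$.

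With $b$ kept, each admissible pair contributes
\[
\ll\frac{x}{\operatorname{lcm}(a,b)\log z}\prod_{p\mid ab}\Bigl(1-\frac2p\Bigr)^{-1}\prod_{\substack{p\mid h\\p\equiv3\pmod4}}\Bigl(1+\frac1p\Bigr)+O\bigl(z^{2}(\log x)^{O(1)}\bigr),
\]
and two things remain that you did not do.

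First, you need $\sum_{a,b}\prod_{p\mid ab}(1-2/p)^{-1}/\operatorname{lcm}(a,b)<\infty$. This holds because $a=\alpha^2$, $b=\beta^2$ and $\operatorname{lcm}(a,b)=\operatorname{lcm}(\alpha,\beta)^2$, but it is not implied by $\sum 1/a<\infty$ as you assert.

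Second, the remainders force a restriction to $a,b\le x^{\delta}$, and your trivial bound handles only $a>x^{\delta}$. For $b>x^{\delta}$ the analogue $\#\{n\le x:b\mid n+h\}\le x/b+1$ has a ``$+1$'' that cannot be summed over $b$ uniformly in $h$ when $h$ is large compared with $x$. Doing so would require controlling large square divisors of integers in the short interval $(h,h+x]$.

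The usual remedy is to let $a$, $b$ be the parts of $n$, $n+h$ composed only of primes $p\equiv3\pmod4$ with $p\le z$. A square of a $z$-smooth number exceeding $x^{\delta}$ has a square divisor in $(x^{\delta/2},x^{\delta/2}z^2]$. This bounds both large ranges by $O(x^{1-\delta/4})$ uniformly in $h$ when, say, $z=x^{\delta/8}$. For the paper's application, where $0<h\le x$, swapping the roles of $n$ and $n+h$ would also suffice.
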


In particular,
\[
\prod_{\substack{p\mid h\\p\equiv3\pmod4}}
\left(1+\frac1p\right)
\leq
\sum_{\substack{d\mid h\\d\ {\rm odd}}}\frac1d,
\]
and hence
\[
\#\{n\leq x:n\in\mathcal{S},\ n+h\in\mathcal{S}\}
\ll
\frac{x}{\log x}
\sum_{\substack{d\mid h\\d\ {\rm odd}}}\frac1d.
\]

We next record an elementary estimate concerning the multiplicative order of
$2$. For an odd positive integer $d$, let
\[
s(d):=\operatorname{ord}_d(2).
\]

\begin{lemma}\label{lem:order}
Uniformly for $T\geq2$,
\[
\sum_{\substack{d\ {\rm odd}\\s(d)\leq T}}\frac1d
\ll\log T.
\]
Consequently, for every fixed $0<\varepsilon<1$,
\[
\sum_{\substack{d\ {\rm odd}}}
\frac{1}{d\,s(d)^{1-\varepsilon}}
<\infty.
\]
\end{lemma}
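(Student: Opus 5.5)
The plan is to prove the first bound with the classical Romanoff argument, and then deduce the convergence statement by partial summation.

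For the first bound, group the odd $d$ with $s(d)\le T$ according to the exact value $k=s(d)$. Every odd $d$ with $s(d)=k$ divides $2^k-1$. Hence
\[
\sum_{\substack{d\ {\rm odd}\\ s(d)\le T}}\frac1d
\le \sum_{\substack{d\mid P_T}}\frac1d
=\frac{\sigma(P_T)}{P_T},
\qquad
P_T:=\prod_{k\le T}(2^k-1).
\]
Note that $d\mid 2^{s(d)}-1\mid P_T$. Next bound the multiplicative expression:
\[
\frac{\sigma(P_T)}{P_T}\le \prod_{p\mid P_T}\Bigl(1-\frac1p\Bigr)^{-1}\ll \exp\Bigl(\sum_{p\mid P_T}\frac1p\Bigr).
\]
It remains to show $\sum_{p\mid P_T}1/p\le \log\log T+O(1)$. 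Any prime $p\mid P_T$ satisfies $s(p)\le T$.

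The main step is counting primes with small order. Fix $k$. Every prime $p$ with $s(p)=k$ satisfies $p\equiv1\pmod k$, so $p>k$. At most $k$ such primes exist, since their product divides $2^k-1<2^k$. Therefore the primes with $s(p)=k$ contribute at most about $\min(\text{their number}/k,\ \dots)$ to $\sum 1/p$.

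A cleaner route is to split at $T^2$:
\[
\sum_{p\mid P_T}\frac1p\le\sum_{p\le T^2}\frac1p+\sum_{\substack{p\mid P_T\\ p>T^2}}\frac1p.
\]
The first sum is $\log\log T+O(1)$ by Mertens. For the second, let $\omega'$ be the number of primes $p>T^2$ dividing $P_T$. Then $T^{2\omega'}<P_T<2^{T^2}$, so $\omega'\ll T^2/\log T$. Each such prime contributes less than $T^{-2}$, so the total is $O(1/\log T)$. This gives $\exp(\sum 1/p)\ll\log T$, which is the first claim.

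For the convergence statement, decompose dyadically according to the size of $s(d)$, using $s(d)\in(2^{j-1},2^j]$:
\[
\sum_{d\ {\rm odd}}\frac{1}{d\,s(d)^{1-\varepsilon}}
\le\sum_{j\ge1}2^{-(j-1)(1-\varepsilon)}\sum_{\substack{d\ {\rm odd}\\ s(d)\le 2^j}}\frac1d
\ll\sum_{j\ge1}j\,2^{-(j-1)(1-\varepsilon)}<\infty.
\]
Here $d=1$ has $s(1)=1$ and is harmless. I expect the only real obstacle to be the bound $\sum_{p\mid P_T}1/p\le\log\log T+O(1)$. The trick of splitting at $T^2$ handles it.
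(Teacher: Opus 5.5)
Your proof is correct and follows the same route as the paper. You bound the sum by $\sigma(P_T)/P_T$ via $d\mid 2^{s(d)}-1\mid P_T$, and then need only $\log\log P_T\ll\log T$. The differences are cosmetic: you prove the standard bound $\sigma(n)/n\ll\log\log n$ inline (Mertens up to $T^2$ plus a count of the larger prime factors of $P_T$) where the paper simply cites it, and you use dyadic blocks instead of partial summation for the convergence claim; the paragraph about primes with $s(p)=k$ is unused and can be deleted.
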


\begin{proof}
Set
\[
A(T)
:=
\sum_{\substack{d\ {\rm odd}\\s(d)\leq T}}\frac1d
\]
and
\[
N_T:=\prod_{1\leq j\leq T}(2^j-1).
\]
If $s(d)\leq T$, then
\[
d\mid 2^{s(d)}-1,
\]
and therefore
\[
d\mid N_T.
\]
It follows that
\[
A(T)
\leq
\sum_{d\mid N_T}\frac1d
=
\frac{\sigma(N_T)}{N_T}.
\]
Using the standard estimate
\[
\frac{\sigma(n)}{n}\ll\log_2(3n),
\]
we obtain
\[
A(T)
\ll
\log_2(3N_T).
\]
Since
\[
\log N_T
=
\sum_{j\leq T}\log(2^j-1)
\ll T^2,
\]
we conclude that
\[
A(T)\ll\log T.
\]

For the second assertion, partial summation gives
\[
\begin{aligned}
\sum_{\substack{d\ {\rm odd}}}
\frac{1}{d\,s(d)^{1-\varepsilon}}
&=
\sum_{k=1}^{\infty}
\frac{A(k)-A(k-1)}{k^{1-\varepsilon}} \\
&\ll
\sum_{k=1}^{\infty}
\frac{A(k)}{k^{2-\varepsilon}} \\
&\ll
\sum_{k=2}^{\infty}
\frac{\log k}{k^{2-\varepsilon}}
<\infty.
\end{aligned}
\]
\end{proof}

We shall also use the standard divisor bound
\[
\tau(n)\ll_\varepsilon n^\varepsilon
\]
for every $\varepsilon>0$.
\bigskip

\section{Short Proof}\label{sec:proof}
For $n\leq x$, define
\[
r(n)
:=
\#\left\{
(m,c):
n=m+2^{c^2},\
m\in\mathcal{S},\
c\in\mathbb{Z}_{\geq0}
\right\},
\]
and let
\[
B(x):=\#\{n\leq x:r(n)>0\}.
\]
It is enough to prove
\[
B(x)\gg x.
\]

By the Cauchy--Schwarz inequality,
\[
\left(\sum_{n\leq x}r(n)\right)^2
\leq
B(x)\sum_{n\leq x}r(n)^2.
\]
We shall show that
\[
\sum_{n\leq x}r(n)\gg x
\]
and
\[
\sum_{n\leq x}r(n)^2\ll x.
\]

We begin with the first moment. Restricting to
\[
m\leq\frac{x}{2}
\qquad\text{and}\qquad
2^{c^2}\leq\frac{x}{2},
\]
we obtain
\[
\sum_{n\leq x}r(n)
\geq
\#\left\{m\leq\frac{x}{2}:m\in\mathcal{S}\right\}
\#
\left\{
c\geq0:2^{c^2}\leq\frac{x}{2}
\right\}.
\]
By Landau's theorem,
\[
\#\left\{m\leq\frac{x}{2}:m\in\mathcal{S}\right\}
\gg
\frac{x}{\sqrt{\log x}},
\]
while
\[
\#
\left\{
c\geq0:2^{c^2}\leq\frac{x}{2}
\right\}
\asymp
\sqrt{\log x}.
\]
Therefore,
\[
\sum_{n\leq x}r(n)\gg x.
\]

It remains to estimate the second moment. Put
\[
C:=\left\lfloor
\sqrt{\frac{\log x}{\log2}}
\right\rfloor.
\]
If $r(n)>0$ and $n\leq x$, then necessarily $c\leq C$. Expanding the
second moment gives
\[
\sum_{n\leq x}r(n)^2
=
\#\left\{
(m_1,m_2,c_1,c_2):
m_i\in\mathcal{S},\
m_i+2^{c_i^2}\leq x,\
m_1+2^{c_1^2}
=
m_2+2^{c_2^2}
\right\}.
\]

The contribution of $c_1=c_2$ is
\[
\ll
C\,\#\{m\leq x:m\in\mathcal{S}\}
\ll x
\]
by Landau's theorem. By symmetry, it therefore suffices to consider
$c_1<c_2$.

For fixed $0\leq c_1<c_2\leq C$, put
\[
h
:=
2^{c_2^2}-2^{c_1^2}.
\]
The relation
\[
m_1+2^{c_1^2}
=
m_2+2^{c_2^2}
\]
is equivalent to
\[
m_1=m_2+h.
\]
Hence Lemma~\ref{lem:correlation} gives
\[
\#\left\{
(m_1,m_2)\in\mathcal{S}^2:
m_1,m_2\leq x,\
m_1-m_2=h
\right\}
\ll
\frac{x}{\log x}
\sum_{\substack{d\mid h\\d\ {\rm odd}}}\frac1d.
\]

Since $d$ is odd,
\[
d\mid h
\]
if and only if
\[
d\mid 2^{c_2^2-c_1^2}-1.
\]
Thus the off-diagonal contribution is bounded by
\[
\frac{x}{\log x}
\sum_{0\leq c_1<c_2\leq C}
\sum_{\substack{d\mid 2^{c_2^2-c_1^2}-1\\d\ {\rm odd}}}
\frac1d.
\]
We shall prove that
\[
\sum_{0\leq c_1<c_2\leq C}
\sum_{\substack{d\mid 2^{c_2^2-c_1^2}-1\\d\ {\rm odd}}}
\frac1d
\ll
\log x.
\]

For $m\geq1$, let
\[
\nu(m)
:=
\#\left\{
(c_1,c_2):
0\leq c_1<c_2\leq C,\
c_2^2-c_1^2=m
\right\}.
\]
Since
\[
c_2^2-c_1^2
=
(c_2-c_1)(c_2+c_1),
\]
every such pair determines a divisor
\[
r:=c_2+c_1
\]
of $m$ satisfying
\[
r\geq\sqrt m
\]
and
\[
r+\frac{m}{r}=2c_2\leq2C.
\]
Consequently,
\[
\nu(m)
\leq
\#\left\{
r\mid m:
r\geq\sqrt m,\
r+\frac{m}{r}\leq2C
\right\}.
\]

Let $d$ be odd and put
\[
s:=s(d)=\operatorname{ord}_d(2).
\]
Since
\[
d\mid 2^m-1
\]
if and only if
\[
s\mid m,
\]
we have
\[
\begin{aligned}
\sum_{\substack{m\leq C^2\\s\mid m}}\nu(m)
&\leq
\sum_{r\leq2C}
\#\left\{
m:
s\mid m,\
r\mid m,\
m\leq r^2
\right\} \\
&\leq
\sum_{r\leq2C}
\frac{r^2}{\operatorname{lcm}(r,s)} \\
&=
\frac1s
\sum_{r\leq2C}
r\,\gcd(r,s).
\end{aligned}
\]
Using
\[
\gcd(r,s)
=
\sum_{e\mid\gcd(r,s)}\varphi(e),
\]
we obtain
\[
\begin{aligned}
\sum_{r\leq2C}r\,\gcd(r,s)
&=
\sum_{e\mid s}\varphi(e)
\sum_{\substack{r\leq2C\\e\mid r}}r \\
&\ll
C^2
\sum_{e\mid s}\frac{\varphi(e)}{e} \\
&\ll
C^2\tau(s).
\end{aligned}
\]
Hence, for every fixed $\varepsilon>0$,
\[
\sum_{\substack{m\leq C^2\\s(d)\mid m}}\nu(m)
\ll_\varepsilon
\frac{C^2}{s(d)^{1-\varepsilon}}.
\]
Since
\[
C^2\ll\log x,
\]
we conclude that
\[
\sum_{\substack{m\leq C^2\\s(d)\mid m}}\nu(m)
\ll_\varepsilon
\frac{\log x}{s(d)^{1-\varepsilon}}.
\]

We may now interchange the order of summation:
\[
\begin{aligned}
&\sum_{0\leq c_1<c_2\leq C}
\sum_{\substack{d\mid 2^{c_2^2-c_1^2}-1\\d\ {\rm odd}}}
\frac1d \\
&\qquad=
\sum_{\substack{d\ {\rm odd}}}
\frac1d
\sum_{\substack{m\leq C^2\\s(d)\mid m}}
\nu(m) \\
&\qquad\ll_\varepsilon
\log x
\sum_{\substack{d\ {\rm odd}}}
\frac{1}{d\,s(d)^{1-\varepsilon}}.
\end{aligned}
\]
By Lemma~\ref{lem:order}, the final series converges. Therefore,
\[
\sum_{0\leq c_1<c_2\leq C}
\sum_{\substack{d\mid 2^{c_2^2-c_1^2}-1\\d\ {\rm odd}}}
\frac1d
\ll
\log x.
\]
It follows that the off-diagonal contribution to the second moment is
\[
\ll
\frac{x}{\log x}\cdot\log x
\ll x.
\]
Together with the diagonal contribution, this gives
\[
\sum_{n\leq x}r(n)^2\ll x.
\]

Finally, Cauchy--Schwarz yields
\[
B(x)
\geq
\frac{\left(\sum_{n\leq x}r(n)\right)^2}
{\sum_{n\leq x}r(n)^2}
\gg
\frac{x^2}{x}
\gg x.
\]
Thus a positive proportion of positive integers can be represented in the form
\[
a^2+b^2+2^{c^2},
\]
which proves Theorem~\ref{thm:main}.

\end{document}